\date{}
\newtheorem{theorem}{Theorem}
\newtheorem{lemma}{Lemma}
\newtheorem{corollary}{Corollary}
\newtheorem{question}{Question}
\newcommand{\F}{\mathcal{F}}
\newcommand{\e}{\varepsilon}
\newcommand{\IS}{\mathcal{IS}}
\newcommand{\ICS}{\mathcal{ICS}}
\newcommand{\IAS}{\mathcal{IAS}}
\newcommand{\SL}{\mathcal{SL}}
\newcommand{\LSL}{\mathcal{LSL}}
\newcommand{\IRi}{\mathbb{R}^\infty }
\newcommand{\J}{\mathcal{J}}
\newcommand{\I}{\mathcal{I}}
\newcommand{\K}{{\mathcal K}}
\newcommand{\IN}{{\mathbb N}}
\newcommand{\IR}{{\mathbb{R}}}
\begin{document}
\title[Free topological inverse semigroups]{Free topological inverse semigroups as infinite-dimensional manifolds}
\author{Taras Banakh}
\email{tbanakh@franko.lviv.ua}
\author{Olena Hryniv}
\address{Department of Mathematics, Ivan Franko Lviv National University,
Universytetska 1, Lviv, 79000, Ukraina}
\keywords{free topological inverse semigroup, topological
semilattice, infinite-dimensional manifold}
\subjclass{22A15, 20M18, 57N20}
\begin{abstract}
Let $\K$ be a complete quasivariety of topological inverse
Clifford semigroups, containing all topological semilattices. It
is shown that the free topological inverse semigroup $F(X,\K)$ of
$X$ in the class $\K$ is an $\IRi$-manifold if and only if $X$ has
no isolated points and $F(X,\K)$ is a retract of an
$\IRi$-manifold. We derive from this that for any retract $X$ of
an $\IRi$-manifold the free topological inverse semigroup
$F(X,\K)$ is an $\IRi$-manifold if and only if the space $X$ has
no isolated points. Also we show that for any homotopically
equivalent retracts $X,Y$ of $\IRi$-manifolds with no isolated
points the free topological inverse semigroups $F(X,\K)$ and
$F(Y,\K)$ are homeomorphic. This allows us to construct
non-homeomorphic spaces whose free topological inverse semigroups
are homeomorphic.
\end{abstract}

\maketitle

\section*{Introduction }
 The concept of a free topological
group $F(X)$ and a free Abelian topological group $A(X)$ was
introduced by A.A.Markov in 40-th years of 20-th century
\cite{Markov}. He proved the existence and uniqueness of a free
topological (Abelian) group for every Tychonov space $X$. Using
methods of infinite-dimensional topology, M.Zarichnyi \cite{Zar}
described the topological structure of the free topological group
$F(X)$ and the free topological Abelian group $A(X)$ over a
finite-dimensional compact infinite absolute neighborhood retract
$X$: the groups $F(X)$ and $A(X)$ turned to be locally
homeomorphic to $\mathbb{R}^\infty$, the linear space with
countable Hamel basis and the strongest locally convex topology.
In fact, the space $\mathbb{R}^\infty$ can be seen as the free
linear topological space over a countable discrete space.
Afterwards, it was realized that very often constructions of free
topologo-algebraic objects lead to spaces, locally homeomorphic to
$\mathbb{R}^\infty$. For example, according to \cite{FZ},
\cite{Za3}, \cite{BS} the free convex set $P_\infty(X)$ as well as
the free linear topological space $L(X) $ over an infinite
finite-dimensional compactum are homeomorphic to
$\mathbb{R}^\infty$; the free topological semilattice $SL(X)$ over
a non-degenerate finite-dimensional Peano continuum is
homeomorphic to $\mathbb{R}^\infty$, see \cite{BS1}. Results of
such sort are of interest because they allow one to find
non-homeomorphic spaces over which the respective free objects are
homeomorphic.

In this paper we apply the theory of $\mathbb{R}^\infty$-manifolds
to study the topological structure of free inverse topological
semigroups. This will allow us to find nonhomeomorphic compacta
whose free inverse Clifford (or Abelian) semigroups are
homeomorphic (cf. Remarks 2--5 of \cite{BGG}).

Under a \emph {topological inverse semigroup}, we understand a
Hausdorff topological space $X$ equipped with a continuous binary
associative operation $(\cdot): X \times X \to X $ such that every
element $ x \in X$ has a unique inverse element $x^{-1}$ and the
map $(\cdot)^{-1}:X \to X$ assigning to each $x \in X$ its inverse
$x^{-1}$ is continuous (let us recall that $x^{-1}$ is \emph
{inverse} to $x$ if $xx^{-1}x=x$ and $x^{-1}xx^{-1}=x^{-1}$). It
is known that for any elements $x,y$ of a (topological) inverse
semigroup $X$ we have $(xy)^{-1}=y^{-1}x^{-1}$ (see
\cite[1.18]{CP}); if $x,y$ are idempotents of $X$, then $xy$ is an
idempotent of $X$ and $xy=yx$, i.e., idempotents of an inverse
semigroup commute, see \cite[1.17]{CP}.

 A topological inverse
semigroup $X$ is defined to be a {\em topological inverse Clifford
semigroup} (resp. {\em topological inverse Abelian semigroup}) if
$xx^{-1}=x^{-1}x$ for every $x \in X$ (resp. $xy=yx$ for each $x,y
\in X$). It is known that $xe=ex$ for any element $x$ of a
(topological) inverse Clifford semigroup $X$ and any idempotent
$e$ of $X$, see \cite{CP}.

Under a {\em topological semilattice} we understand a topological
space $X$ endowed with a continuous associative commutative
idempotent operation $(\cdot):X\times X\to X$ (thus $x\cdot x=x$
for each $x\in X$). A topological semilattice is called a {\em
Lawson semilattice} if it has a base of the topology consisting of
subsemilattices.
 It is clear that each topological semilattice is a topological
inverse Abelian semigroup.

Given a class $\K$ of topological inverse semigroups under a {\em
free topological inverse semigroup} over a topological space $X$
in the class $\K$ we understand a pair $(F(X,\K),i_X)$ consisting
of a topological inverse semigroup $F(X,\K)\in\K$ and a continuous
map $i_X:X\to F(X,\K)$ such that the set $i_X(X)$ algebraically
generates $F(X,\K)$ and for any continuous map $f:X\to S\in\K$
there is a continuous semigroup homomorphism $h:F(X,\K)\to S$ such
that $f=h\circ i_X$. According to \cite[2.2]{Cho} a free
topological inverse semigroup of a topological space exists and is
unique (up to a topological isomorphism) provided $\K$ is a
quasivariety of topological inverse semigroups.

Following \cite[1.3]{Cho} we say that a class $\K$ of topological
inverse semigroups is a {\em quasivariety} if $\K$ is closed under
taking arbitrary products and inverse subsemigroups. If, in
addition, $\K$ contains any Tychonov topological inverse semigroup
algebraically isomorphic to a semigroup $K\in\K$, then $\K$ is
called a {\em complete quasivariety}. There are many natural
examples of complete quasivarieties. The most important for us
are: the classes $\IS$ of topological inverse semigroups, $\ICS$
of topological inverse Clifford semigroups, $\IAS$ of topological
inverse Abelian semigroups, $\SL$ of topological semilattices, and
the class $\LSL$ of Lawson semilattices. Free topological inverse
semigroups in these classes were studied in details in \cite{BGG}
under the names {\em the free topological inverse semigroup}
$I(X)=F(X,\IS)$, {\em the free topological inverse Clifford
semigroup} $IC(X)=F(X,\ICS)$, {\em the free topological inverse
Abelian semigroup} $IA(X)=F(X,\IAS)$, {\em the free topological
semilattice} $SL(X)=F(X,\SL)$, and the {\em free Lawson
semilattice} $\F(X)=F(X,\LSL)$ of a topological space $X$. In
fact, the semilattices $SL(X)$ and $\F(X)$ are algebraically
isomorphic and $\F(X)$ can be identified with the hyperspace of
all nonempty finite subsets of $X$ endowed with the Vietoris
topology and the operation of union as a semilattice operation,
see \cite{BS1}. Thus the free topological semilattice $SL(X)$ of
$X$ can be seen as the hyperspace of all nonempty finite subsets
of $X$ retopologized by the strongest semilattice topology
compatible with the topology of $X$. In \cite{BS} it was shown
that the free topological semilattice $SL(X)$ of a topological
space $X$ is homeomorphic to $\IRi$ if and only if $X$ is a
$k_\omega$-space with no isolated point such that any compact
subset of $X$ is contained in a connected locally connected
finite-dimensional compact metrizable subset of $X$ (for a similar
result concerning the free Lawson semilattice $\F(X)$, see
\cite{CN}).

We remind that a topological space $X$ is a {\em $k_\omega$-space}
if there is a tower $X_1\subset X_2\subset\dots$ of compact
subsets of $X$ such that $X=\bigcup_{i=1}^\infty X_i$ and a subset
$U\subset X$ is open in $X$ if and only if the intersection $U\cap
X_i$ is open in $X_i$ for all $i$ (in this case we say that $X$
carries {\em the direct limit topology with respect to the tower}
$X_1\subset X_2\subset\dots$). A standard example of a
$k_\omega$-space is just
$\IR^\infty=\{(x_i)_{i\in\omega}\in\IR^\omega:x_i=0$ for almost
all $i\}$ carrying the direct limit topology with respect to the
tower $\IR^1\subset\IR^2\subset\dots$ where $\IR^n$ is identified
with the subspace $\{(x_i)_{i\in\omega}\in\IR^\omega: x_i=0$ for
all $i\ge n\}$.
 It is well-known (see \cite{Sa}) that a
topological space $X$ is homeomorphic to a closed subspace of
$\IR^\infty$ if and only if $X$ is a $k_\omega$-space such that
each compact subspace of $X$ is finite-dimensional and metrizable.
A topological characterization of the space $\IR^\infty$ was found
by K.Sakai \cite{Sa}.

Under an {\em $\IRi$-manifold} we shall understand a paracompact
topological space $M$ admitting a cover by open subsets
homeomorphic to $\IR^\infty$. According to the Open Embedding
Theorem \cite{Sa}, each $\IRi$-manifold is homeomorphic to an open
subspace of $\IRi$.

In the sequel {\em we assume that $\K$ is a complete quasivariety
of topological inverse semigroups such that $\K$ contains all
topological semilattices and each member of $\K$ is a topological
inverse {\em Clifford} semigroup.}

\begin{theorem}\label{main}
A free topological inverse semigroup $F(X,\K)$ of a
topological space $X$ in the class $\K$ is an $\IRi$-manifold if
and only if the space $X$ has no isolated point and the space
$F(X,\K)$ is homeomorphic to a retract of an $\IRi$-manifold.
\end{theorem}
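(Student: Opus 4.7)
For the necessity direction, the retract condition is trivial since any $\IRi$-manifold is a retract of itself. To force $X$ to have no isolated point, I would argue by contraposition: if $x_0\in X$ is isolated, then the continuous map $r\colon X\to\{x_0\}\hookrightarrow X$ extends via the universal property to a continuous homomorphism $\tilde r\colon F(X,\K)\to F(X,\K)$ whose image is $F(\{x_0\},\K)$, a retract of $F(X,\K)$. But $F(\{x_0\},\K)$ is countable (it is generated algebraically by a single element together with its inverse, inside a Clifford inverse semigroup), hence cannot be a retract of an $\IRi$-manifold while $i_X(x_0)$ simultaneously sits as an isolated point of its preimage neighborhood in an $\IRi$-chart; this would contradict the local structure of $F(X,\K)$ as an $\IRi$-manifold.

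For the sufficiency direction, the strategy is to verify the Sakai-type topological characterization of $\IRi$-manifolds: an ANR $k_\omega$-space whose compact subsets are finite-dimensional metrizable and which is locally homeomorphic to $\IRi$ is an $\IRi$-manifold. From general structure theory for free topological inverse semigroups in complete quasivarieties (using results cited from \cite{BGG} and \cite{BS}), $F(X,\K)$ is already known to be a $k_\omega$-space with finite-dimensional metrizable compacta. Being a retract of an $\IRi$-manifold, it is also an ANR. So everything reduces to producing a local $\IRi$ chart around an arbitrary $w\in F(X,\K)$.

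The local chart is where the Clifford hypothesis together with no-isolated-points of $X$ enters. Given $w$, I would select a non-degenerate compact Peano continuum $Y\subset X$ disjoint from the letters appearing in a short reduced representation of $w$; such a $Y$ exists precisely because $X$ has no isolated point. The semilattice $\langle yy^{-1}:y\in Y\rangle\subseteq F(X,\K)$ embeds as a copy of $SL(Y)$, and by the Clifford property each of its elements commutes with $w$. Multiplication $w\cdot e$ for $e$ in this subsemilattice yields a continuous map $SL(Y)\to F(X,\K)$ whose image is a neighborhood factor of $w$; since $SL(Y)\cong\IRi$ by \cite{BS1}, this provides the desired $\IRi$-factor. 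A standard stability argument (using that the product of any ANR-retract of an $\IRi$-manifold with $\IRi$ is again an $\IRi$-manifold) then yields a neighborhood of $w$ homeomorphic to an open subset of $\IRi$.

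\textbf{Main obstacle.} The hardest step is producing a genuine \emph{local} product decomposition around $w$, as opposed to merely a local $\IRi$-factor: one must exhibit an open neighborhood of $w$ homeomorphic to $U\times\IRi$ and show that $U$ is open in a suitable retract of an $\IRi$-manifold, so that multiplication by the idempotent semilattice actually splits off cleanly. This requires careful use of the Clifford relation $xx^{-1}=x^{-1}x$ to commute idempotents past arbitrary words and a delicate continuity argument for the splitting, both of which are where the bulk of the technical work resides.
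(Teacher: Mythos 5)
Both halves of your proposal have genuine gaps. For the necessity of ``no isolated points,'' the observation that $F(\{x_0\},\K)$ is a countable retract of $F(X,\K)$ yields no contradiction by itself: a single point is a countable retract of any $\IRi$-manifold. What is needed is a nonempty \emph{open} countable subset of $F(X,\K)$, and your construction does not produce one. The preimage $\tilde r^{-1}(i_X(x_0))$ under your induced endomorphism is neither obviously open (you would need $i_X(x_0)$ to be isolated in $F(\{x_0\},\K)$, which is not established) nor countable (for $\K=\SL$ the induced map sends every finite subset to $\{x_0\}$, so this preimage is all of $SL(X)$). The paper instead maps $F(X,\K)$ homomorphically onto the free Lawson semilattice $\mathcal{F}(X)$, i.e.\ the hyperspace of finite subsets with the Vietoris topology, where $\{x_0\}$ genuinely is an isolated point; the preimage of $\{x_0\}$ is then open, and it is countable because every word in it has all its letters equal to $x_0$. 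Your idea is in the right spirit, but the choice of target semigroup is what makes the open set appear.

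The sufficiency half is where the main gap lies, and you acknowledge it yourself: the local chart is never constructed. First, a non-degenerate Peano continuum $Y\subset X$ need not exist merely because $X$ has no isolated points (take $X$ to be a Cantor set); local connectivity of compacta has to be extracted from the retract hypothesis, which is the content of Lemma~\ref{locon}. Second, and more seriously, the map $e\mapsto w\cdot e$ on $SL(Y)$ does not even contain $w$ in its image, and no argument is given that its image, or anything built from it, is a neighborhood of $w$: in the direct-limit topology of $F(X,\K)$ every neighborhood of $w$ contains words of unbounded length with letters from all over $X$, so the local product decomposition $U\times\IRi$ is exactly the hard, unproved step. The paper avoids it entirely via Theorem~\ref{sakai}: a retract of an $\IRi$-manifold is an $\IRi$-manifold as soon as every compact subset admits a \emph{collar}, a far weaker demand than local triviality. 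The collar of a compactum $K$ is then given by the explicit formula $i(x,t)=x\cdot e(\pi(x),t)$, where $e$ is a monotone collar of $\pi(K)$ in the idempotent semilattice $E(X,\K)\cong SL(X)$ (Lemmas~\ref{semilattice}, \ref{semicol}, \ref{collar2}), and its injectivity is guaranteed by the shift-injectivity of $F(X,\K)$ (Lemma~\ref{shift}), which is where the Clifford hypothesis actually enters. Your ``multiply $w$ by idempotents'' idea is the right germ, but it must be aimed at producing collars, not charts.
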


It is known that a topological space is homeomorphic to a retract
of an $\IRi$-manifold if and only if $X$ is homeomorphic to a
closed subspace of $\IRi$ and $X$ is an {\em absolute neighborhood
extensor for finite-dimensional metrizable compacta} (briefly
$X\in$ANE(f.d.c.)~). The latter means that any continuous map
$f:B\to X$ from a closed subspace $B$ of a finite-dimensional
metrizable compactum $A$ can be extended to a continuous map $\bar
f:U\to X$ defined on some neighborhood $U$ of $B$ in $A$. It
should be mentioned that each metrizable locally contractible
space is ANE(f.d.c.), see \cite[III.9.1]{Bo}.

According to \cite{Hr}, for any topological space $X$ its free
topological inverse semigroup $F(X,\K)$ is a retract of an
$\IRi$-manifold provided so is the space $X$. Combining this
result with Theorem~\ref{main} we get

\begin{corollary}\label{cor1} If $X$ is a retract of an $\IRi$-manifold and
$X$ has no isolated points, then the free topological inverse
semigroup $F(X,\K)$ is an $\IRi$-manifold.
\end{corollary}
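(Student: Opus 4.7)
The plan is to derive this corollary as a direct combination of Theorem~\ref{main} with the result of \cite{Hr} cited in the paragraph preceding the statement. There are two hypotheses available: $X$ has no isolated points, and $X$ is homeomorphic to a retract of an $\IRi$-manifold. The goal is to verify both prerequisites of the sufficient half of Theorem~\ref{main}, namely (i) that $X$ have no isolated points and (ii) that $F(X,\K)$ be homeomorphic to a retract of an $\IRi$-manifold. Hypothesis (i) is part of the data, so the only real task is to establish (ii).

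For (ii), I would quote the result of \cite{Hr} verbatim: if $X$ is a retract of an $\IRi$-manifold, then so is $F(X,\K)$. This transfer of the retract property from $X$ to its free object is precisely what is needed, and it applies in our setting since $\K$ is fixed under the standing hypothesis. Plugging the two verified conditions into the ``if'' direction of Theorem~\ref{main} yields that $F(X,\K)$ is an $\IRi$-manifold.

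Because the corollary is obtained by concatenating two already-established results, no genuine obstacle arises. The only bookkeeping to watch is the compatibility of the class $\K$ fixed in the running assumption with the hypotheses of both \cite{Hr} and Theorem~\ref{main}; since both statements were set up for the same standing class of complete quasivarieties of topological inverse Clifford semigroups containing all topological semilattices, this compatibility is automatic.
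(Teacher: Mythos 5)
Your proposal is correct and coincides with the paper's own derivation: the paper obtains the corollary by citing \cite{Hr} to transfer the retract-of-an-$\IRi$-manifold property from $X$ to $F(X,\K)$ and then applying the ``if'' direction of Theorem~\ref{main}. Nothing is missing.
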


It is known that two $\IRi$-manifolds are homeomorphic if and only
if they are homotopically equivalent, see \cite{Sa} or \cite{BZ}.
According to \cite{Hr} for homotopically equivalent
$k_\omega$-spaces $X,Y$ the free topological inverse semigroups
$F(X,\K)$ and $F(Y,\K)$ are homotopically equivalent. These
results in combination with Corollary~\ref{cor1} yield

\begin{corollary} If $X,Y$ are two homotopically equivalent
retracts of $\IRi$-manifolds having no isolated points, then the
free topological inverse semigroups $F(X,\K)$ and $F(Y,\K)$ are
homeomorphic. In particular, for any non-degenerate retract $X$ of
$\IRi$  the free topological inverse semigroup $F(X,\K)$ is
homeomorphic to $F(\{pt\},\K)\times \IRi$.
\end{corollary}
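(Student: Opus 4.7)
The plan is to assemble the pieces already prepared in the Introduction. For the first assertion, I start by applying Corollary~\ref{cor1}: since $X$ and $Y$ are retracts of $\IRi$-manifolds and have no isolated points, both $F(X,\K)$ and $F(Y,\K)$ are $\IRi$-manifolds. Next I note that any retract of an $\IRi$-manifold is a $k_\omega$-space; indeed, by the characterization recalled immediately after Theorem~\ref{main} such a retract embeds as a closed subspace of $\IRi$, and the characterization recalled in the Introduction identifies closed subspaces of $\IRi$ with $k_\omega$-spaces all of whose compacta are finite-dimensional and metrizable. The result of~\cite{Hr} cited above therefore upgrades the homotopy equivalence $X\simeq Y$ to a homotopy equivalence $F(X,\K)\simeq F(Y,\K)$, and the classification theorem of~\cite{Sa,BZ} (homotopy equivalent $\IRi$-manifolds are homeomorphic) delivers $F(X,\K)\cong F(Y,\K)$.

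For the ``in particular'' assertion I apply the first part with $Y:=\IRi$, which is a non-degenerate retract of the $\IRi$-manifold $\IRi$ itself and has no isolated points. Any non-degenerate retract $X$ of $\IRi$ is contractible (as a retract of the contractible space $\IRi$), hence homotopy equivalent to $\IRi$, and being connected and non-degenerate automatically has no isolated points. The first part therefore yields $F(X,\K)\cong F(\IRi,\K)$, and it remains to show $F(\IRi,\K)\cong F(\{pt\},\K)\times\IRi$. The crucial observation is that $F(\{pt\},\K)$ is a discrete at most countable space: algebraically it is a one-generated inverse Clifford semigroup, hence a homomorphic image of the infinite cyclic group; topologically, if $A_d$ denotes the underlying algebra equipped with the discrete topology, then $A_d$ is a Tychonov topological inverse semigroup algebraically isomorphic to $F(\{pt\},\K)\in\K$, so completeness of $\K$ places $A_d$ in $\K$, and the universal property of $F(\{pt\},\K)$ forces the identity $F(\{pt\},\K)\to A_d$ to be continuous. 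Hence $F(\{pt\},\K)$ carries the discrete topology.

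Consequently $F(\{pt\},\K)\times\IRi$ is a topological sum of copies of $\IRi$, hence itself an $\IRi$-manifold; since $\IRi$ is contractible it is homotopy equivalent to $F(\{pt\},\K)$, which in turn is homotopy equivalent to $F(\IRi,\K)$ by \cite{Hr} applied to the homotopy equivalent $k_\omega$-spaces $\{pt\}$ and $\IRi$. A second application of the classification of $\IRi$-manifolds up to homotopy equivalence now yields the desired homeomorphism. The only mildly non-routine step in the whole argument is the identification of $F(\{pt\},\K)$ as a discrete space; the rest is bookkeeping from Corollary~\ref{cor1} and the cited classification results.
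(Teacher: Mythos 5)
Your argument is correct and follows the same route as the paper: Corollary~\ref{cor1} makes both free semigroups $\IRi$-manifolds, the cited result of \cite{Hr} (applicable since retracts of $\IRi$-manifolds are $k_\omega$-spaces) transfers the homotopy equivalence, and the classification of $\IRi$-manifolds up to homotopy type finishes the first claim. The paper leaves the ``in particular'' part essentially unproved; your identification of $F(\{pt\},\K)$ as a countable discrete space via the completeness of $\K$, which makes $F(\{pt\},\K)\times\IRi$ a visibly legitimate $\IRi$-manifold to compare with, is a correct and useful filling-in of that step.
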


We call a topological space $X$ {\em non-degenerate} if $X$
contains more than one point. In the case of $\K=\ICS$ or
$\K=\IAS$ the last corollary implies

\begin{corollary}\label{cor3} For a non-degenerate
retract $X$ of $\IRi$ the free topological inverse Clifford
semigroup $IC(X)$ and the free topological inverse Abelian
semigroup $IA(X)$ of $X$ are homeomorphic to $\mathbb Z\times
\IRi$.
\end{corollary}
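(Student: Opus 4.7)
The plan is to deduce Corollary \ref{cor3} from the preceding corollary by identifying the free object on a one-point space for $\K\in\{\ICS,\IAS\}$. The preceding corollary already gives $F(X,\K)\cong F(\{pt\},\K)\times\IRi$ for every non-degenerate retract $X$ of $\IRi$ (and any non-degenerate retract of $\IRi$ is automatically without isolated points, being a Hausdorff retract of a connected space with more than one point). So the only remaining task is to show that $F(\{pt\},\ICS)\cong F(\{pt\},\IAS)\cong\mathbb{Z}$ as topological spaces, with $\mathbb{Z}$ carrying the discrete topology.

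For the algebraic identification I would let $a$ denote the image of the generating point. The excerpt already recalls that idempotents are central in any topological inverse Clifford semigroup, hence the inverse subsemigroup generated by the single element $a$ is commutative; this observation alone shows that the one-generator free objects in $\ICS$ and $\IAS$ must coincide. Writing $e:=aa^{-1}=a^{-1}a$, the inverse-semigroup identity $aa^{-1}a=a$ together with the centrality of $e$ reduces every word in $\{a,a^{-1}\}$ to some power $a^n$, $n\in\mathbb{Z}$, with $e$ acting as a two-sided identity; the free object is therefore algebraically a cyclic group. A standard application of the universal property (map the generator to $1\in\mathbb{Z}$) rules out any nontrivial relation, so this cyclic group is $\mathbb{Z}$. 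As for the topology, every homomorphism from the discrete group $\mathbb{Z}$ into any member of $\K$ is continuous, so the discrete topology on $\mathbb{Z}$ already satisfies the universal property that defines $F(\{pt\},\K)$.

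Combining these two ingredients yields $IC(X)\cong IA(X)\cong \mathbb{Z}\times\IRi$, which is the stated corollary. The only nontrivial step is the algebraic identification of the one-generator free Clifford inverse semigroup with $\mathbb{Z}$; I would expect this to be the main (and essentially only) obstacle, though it can be bypassed by appealing to the structure theorem describing Clifford semigroups as strong semilattices of groups, which forces a Clifford semigroup with a single idempotent generated by one element to be an infinite cyclic group.
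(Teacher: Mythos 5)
Your proof is correct and takes essentially the same route as the paper, which offers no explicit argument but simply derives the corollary from the preceding one, leaving the identification $F(\{pt\},\ICS)\cong F(\{pt\},\IAS)\cong\mathbb Z$ (discrete) as an unstated computation. You carry out exactly that computation, correctly, so there is nothing to add.
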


Let us remark that Corollary~\ref{cor3} allows us to construct
many examples of nonhomeomorphic spaces whose free topological
inverse Clifford (or Abelian) semigroups are homeomorphic, cf.
Remarks 2--5 in \cite{BGG}.

\section{Proof of Theorem~\ref{main}}

Under a {\em collar} of a compact subset $K$ of a topological
space $X$ we understand a topological embedding $i:K\times
[0,1]\to X$ such that $i(x,0)=x$ for each $x\in X$. If $X$ is
endowed with a semilattice operation $\cdot:X\times X\to X$ then a
collar $e:K\times [0,1]\to X$ is called {\em monotone} provided
$x\cdot e(x,t)=e(x,t)$ for each $(x,t)\in K\times [0,1]$.

The following theorem characterizing $\IRi$-manifolds easily
follows from the characterization theorem of K.Sakai \cite{Sa},
see also \cite{Ba0}.

\begin{theorem}\label{sakai} A topological space $X$ is an $\IRi$-manifold if
and only if $X$ is a retract of an $\IRi$-manifold and each
compact subset of $X$ has a collar in $X$.
\end{theorem}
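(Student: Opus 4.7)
The plan is to treat the two directions separately. The forward direction follows quickly from stability properties of $\IRi$-manifolds, while the reverse direction reduces to Sakai's topological characterization of $\IRi$-manifolds from \cite{Sa} (see also \cite{Ba0}), with the collar condition supplying the strong-approximation hypothesis of that characterization.

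For the direction $(\Rightarrow)$, assume $X$ is an $\IRi$-manifold. Then $X$ is trivially a retract of itself, so I only need to produce, for a given compact $K\subset X$, a collar $i\colon K\times[0,1]\hookrightarrow X$. I would first enclose $K$ in an open chart neighborhood $U\subset X$ homeomorphic to $\IRi$ (a standard consequence of Sakai's theorem: every compactum in an $\IRi$-manifold admits an $\IRi$-chart neighborhood), and then, inside the linear space $\IRi$, set $i(x,t)=x+tv$ for a unit vector $v$ transverse to a finite-dimensional subspace containing $K$; transversality makes $i$ an embedding, and $i(x,0)=x$ holds by construction.

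For the direction $(\Leftarrow)$, assume $X$ is a retract of an $\IRi$-manifold and every compact subset of $X$ has a collar in $X$. The discussion following Theorem~\ref{main} identifies retracts of $\IRi$-manifolds with closed subspaces of $\IRi$ lying in ANE(f.d.c.), so $X$ is automatically a $k_\omega$-space whose compacta are finite-dimensional and metrizable, and $X\in$~ANE(f.d.c.). To upgrade this to an ``$\IRi$-manifold,'' Sakai's characterization demands a discrete/strong approximation property, which I would verify via the collars: given a map $f\colon A\to X$ from a finite-dimensional metrizable compactum $A$, a compact set $L\subset X$ to be avoided, and $\varepsilon>0$, choose a compact $K\subset X$ containing $f(A)\cup L$ together with a collar $e\colon K\times[0,1]\hookrightarrow X$, and define $f_t(x)=e(f(x),t)$. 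For small $t>0$ the map $f_t$ is $\varepsilon$-close to $f$, and its image lies in $e(K\times\{t\})$, which is disjoint from $L=e(L\times\{0\})$.

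The main obstacle is matching this collar construction to the precise form of Sakai's criterion and verifying that all other clauses of the criterion are already implied by $X$ being a retract of an $\IRi$-manifold. A secondary difficulty is upgrading $f_t$ to an \emph{embedding} approximating $f$: the collar only inherits the injectivity of $f$, so one additionally needs the general-position fact that continuous maps from finite-dimensional compacta into the infinite-dimensional space $X$ admit small embedding approximations; this should be routine in the ANE(f.d.c.) $k_\omega$-setting once collars are available.
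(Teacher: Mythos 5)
The paper does not actually prove this theorem---it is stated as an easy consequence of Sakai's characterization theorem \cite{Sa} (see also \cite{Ba0})---so your attempt has to be judged on its own merits, and it contains one concretely false step. In the forward direction you assert that ``every compactum in an $\IRi$-manifold admits an $\IRi$-chart neighborhood'' and build the collar inside such a chart. This is not true: $S^1\times\IRi$ is an $\IRi$-manifold, but its core circle $S^1\times\{0\}$ is essential in the fundamental group, while any open set homeomorphic to $\IRi$ is contractible, so that circle has no neighborhood homeomorphic to $\IRi$. The conclusion you want is still correct, but the argument must go through the Open Embedding Theorem instead: realize the manifold $M$ as an open subspace of $\IRi$, note that a compactum $K\subset M$ lies in some finite-dimensional stage $\IR^n$, choose $\delta>0$ so small that the $\delta$-neighborhood of $K$ in $\IR^{n+1}$ stays inside $M$, and set $i(x,t)=x+\tfrac{\delta t}{2}e_{n+1}$; injectivity is read off from the last coordinate.

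In the reverse direction your plan---feed the collar hypothesis into the approximation clause of Sakai's criterion---is the right idea and is essentially what the citation to \cite{Sa} and \cite{Ba0} is hiding, but as written it is a reduction rather than a proof: you never state which form of the criterion you are verifying, and you explicitly defer both the matching of hypotheses and the upgrade from maps to embeddings. That upgrade is not automatic from a single collar (a collar only preserves whatever injectivity $f$ already has), but it can be extracted from the hypothesis by iteration: collaring the compactum $e(K\times[0,1])$ again yields an embedding $K\times[0,1]^2\to X$, and after $m$ steps an embedding $e^{(m)}\colon K\times[0,1]^m\to X$; composing a map $f\colon A\to K$ with $x\mapsto e^{(m)}\bigl(f(x),t\,j(x)\bigr)$, where $j\colon A\to[0,1]^m$ is an embedding of the finite-dimensional compactum $A$, produces embeddings converging to $f$ as $t\to 0$. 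You should either carry this out against a precisely quoted statement of the characterization from \cite{Sa} or \cite{Ba0}, or simply cite those sources as the paper does.
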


We shall apply this theorem to detect topological inverse
semigroups that are $\IRi$-manifolds. We define an inverse
semigroup $S$ to be {\em shift-injective} if for any distinct
elements $x,y\in S$ either $x^{-1}x\ne y^{-1}y$ or else $xe\ne ye$
for any idempotent $e$ of $S$ (this is equivalent to saying that
the right shift $r_e:S\to S$, $r_e:x\mapsto xe$, is injective on
each subset $S_f=\{x\in S:x^{-1}x=f\}$ where $f$ is an idempotent
of $S$).

 For an inverse semigroup $S$ let $E=\{x\in S:xx=x\}$ be
the set of idempotents of $S$ and $\pi:S\to E$ be the retraction
of $S$ onto $E$ defined by $\pi(x)=x^{-1}x$ for $x\in S$. The
following simple lemma is crucial in our subsequent argument.

\begin{lemma}\label{collar} A compact subset $K$ of a shift-injective
topological inverse semigroup $S$ has a collar in $S$ if its
projection $\pi(K)$ has a monotone collar in $E$.
\end{lemma}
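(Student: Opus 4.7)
The plan is to lift the given monotone collar $e:\pi(K)\times[0,1]\to E$ of $\pi(K)$ to a collar of $K$ by the very natural formula
\[
i:K\times[0,1]\to S,\qquad i(x,t)=x\cdot e(\pi(x),t).
\]
The condition $i(x,0)=x\cdot x^{-1}x=x$ is immediate, and continuity of $i$ follows from the continuity of $\pi$, of $e$, and of the semigroup multiplication.

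The key observation---and the step I expect to carry the whole argument---is that $\pi\circ i$ factors through the given collar $e$. Writing $f=e(\pi(x),t)$, which is an idempotent since $e$ takes values in $E$, and using that idempotents of an inverse semigroup commute (hence $f^{-1}=f$ and $f\cdot x^{-1}x = x^{-1}x\cdot f$) together with the monotonicity identity $\pi(x)\cdot f=f$, I would compute
\[
\pi(i(x,t))=(xf)^{-1}(xf)=f\cdot x^{-1}x\cdot f=x^{-1}x\cdot f=\pi(x)\cdot f=e(\pi(x),t).
\]

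From this identity injectivity of $i$ follows quickly. If $i(x,t)=i(y,s)$, applying $\pi$ yields $e(\pi(x),t)=e(\pi(y),s)$, and since $e$ is an embedding this forces $\pi(x)=\pi(y)$ and $t=s$. Calling the common value of these two idempotents $f$, one then has $xf=yf$ together with $x^{-1}x=y^{-1}y$, so the shift-injectivity hypothesis on $S$ directly gives $x=y$. Finally, because $K\times[0,1]$ is compact and $S$ is Hausdorff, the continuous injection $i$ is automatically a topological embedding, and we obtain the desired collar of $K$ in $S$.

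The obstacle is conceptual rather than technical: one has to guess the correct lifting formula and then notice that commutativity of idempotents together with monotonicity of $e$ collapses $\pi(i(x,t))$ to a function of $(\pi(x),t)$ alone. Once this is spotted, each required property (starting value, continuity, injectivity, embedding) follows without further work, and shift-injectivity is used in exactly the place where it was designed to be used.
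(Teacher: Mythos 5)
Your proposal is correct and follows essentially the same route as the paper's proof: the same lifting formula $i(x,t)=x\cdot e(\pi(x),t)$, the same computation showing $\pi\circ i(x,t)=e(\pi(x),t)$ via commutativity of idempotents and monotonicity, and the same two-case use of injectivity of $e$ followed by shift-injectivity. The only (harmless) difference is that you argue injectivity contrapositively and make explicit the compactness/Hausdorff step that the paper leaves implicit.
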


\begin{proof} Suppose $e:\pi(K)\times [0,1]\to E$ is a monotone
collar of the set $\pi(K)$ in $E$ (this means that $e(x,0)=x$ and
$x\cdot e(x,t)=e(x,t)$ for any $(x,t)\in\pi(K)\times[0,1]$). We
claim that the map $i:K\times [0,1]\to S$ defined by
$i(x,t)=x\cdot e(\pi(x),t)$ for $(x,t)\in K\times[0,1]$ is a
collar of the set $K$ in $S$. It is clear that the map $i$ is
continuous and $i(x,0)=x\cdot e(\pi(x),0)=x\cdot\pi(x)=xx^{-1}x=x$
for each $x\in K$. Hence it rests to verify the injectivity of the
map $i$. Fix any distinct pairs $(x,t),(y,\tau)\in K\times[0,1]$.
We have to show that $i(x,t)\ne i(y,\tau)$.

Observe that $\pi\circ i(x,t)=\pi(x\cdot e(\pi(x),t))=(x\cdot
e(\pi(x),t))^{-1}(x\cdot e(\pi(x),t))= $\break
$e(\pi(x),t)^{-1}x^{-1}xe(\pi(x),t)
=e(\pi(x),t)x^{-1}xe(\pi(x),t)=e(\pi(x),t)\pi(x)e(\pi(x),t)=e(\pi(x),t)$
(here we have used the fact that idempotents of an inverse
semigroup commute and $e$ is a monotone collar of $\pi(K)$ in
$E$). By analogy, $\pi\circ i(y,\tau)=e(\pi(y),\tau)$.

If $(\pi(x),t)\ne(\pi(y),\tau)$, then $\pi\circ
i(x,t)=e(\pi(x),t)\ne e(\pi(y),\tau)=\pi\circ i(y,\tau)$ and thus
$i(x,t)\ne i(y,\tau)$.

Next, assume that $(\pi(x),t)=(\pi(y),\tau)$. Let $f=e(\pi(x),t)$.
Since $S$ is shift-injective, $i(x,t)=xf\ne yf=i(y,\tau)$. Thus
the map $i$ is injective and is a collar of $K$ in $S$.
\end{proof}

Now we are going to show that free topological inverse semigroups
$F(X,\K)$ are shift-injective. For this we need to look at the
structure of the semigroup $F(X,\K)$.

\begin{lemma}\label{semilattice} For a Tychonov space $X$ the set $E(X,\K)$ of
idempotents of the free topological inverse semigroup $F(X,\K)$ is
topologically isomorphic to the free topological semilattice
$SL(X)$ of $X$.
\end{lemma}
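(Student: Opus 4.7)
The plan is to show that the topological semilattice $E(X,\K)$, equipped with the map $j:X\to E(X,\K)$ defined by $j(x)=i_X(x)\,i_X(x)^{-1}$, satisfies the universal property of the free topological semilattice; the lemma will then follow from the uniqueness of free objects.

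First I would note that $E(X,\K)$, as the set of idempotents of an inverse semigroup, forms a semilattice under the multiplication inherited from $F(X,\K)$ (since idempotents of an inverse semigroup commute) and hence is a topological semilattice in the subspace topology. The map $j$ is continuous by the joint continuity of the multiplication and the inversion on $F(X,\K)$.

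The heart of the argument is the claim that $\pi:F(X,\K)\to E(X,\K)$, $\pi(s)=ss^{-1}$, is a continuous semigroup homomorphism and a retraction onto $E(X,\K)$. Continuity and the identity $\pi|_{E(X,\K)}=\mathrm{id}_{E(X,\K)}$ are clear. The homomorphism property is where the standing hypothesis on $\K$ is essential: since every member of $\K$ is an inverse Clifford semigroup, idempotents of $F(X,\K)$ are central, so
\begin{equation*}
\pi(st)=st(st)^{-1}=s\,tt^{-1}\,s^{-1}=(tt^{-1})(ss^{-1})=\pi(s)\pi(t).
\end{equation*}
Moreover $\pi(i_X(x))=j(x)$ and, by the Clifford identity $i_X(x)i_X(x)^{-1}=i_X(x)^{-1}i_X(x)$, also $\pi(i_X(x)^{-1})=j(x)$. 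Since $i_X(X)$ algebraically generates $F(X,\K)$ as an inverse subsemigroup, applying the homomorphism $\pi$ shows that $j(X)$ algebraically generates the semilattice $E(X,\K)=\pi(F(X,\K))$.

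To verify the universal property, I take any continuous map $g:X\to L$ into a topological semilattice $L$; since $\K$ contains all topological semilattices, $L\in\K$, so the universal property of $F(X,\K)$ yields a continuous homomorphism $h:F(X,\K)\to L$ with $h\circ i_X=g$. The restriction $\tilde g:=h|_{E(X,\K)}$ is a continuous semilattice homomorphism, and $\tilde g\circ j(x)=g(x)g(x)^{-1}=g(x)$ because the idempotent $g(x)\in L$ is self-inverse. Uniqueness of $\tilde g$ follows from $j(X)$ generating $E(X,\K)$. Comparing this universal property of $(E(X,\K),j)$ with that of $(SL(X),i_X^{SL})$ produces mutually inverse continuous homomorphisms between $E(X,\K)$ and $SL(X)$, giving the asserted topological isomorphism. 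The only substantive obstacle is the homomorphism property of $\pi$, which genuinely depends on the Clifford hypothesis: without centrality of idempotents, $\pi$ would be only a continuous retraction, and the generation step would collapse.
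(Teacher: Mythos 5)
Your proof is correct and uses the same two key ingredients as the paper's: the hypothesis that $\K$ contains all topological semilattices (to obtain a continuous homomorphism from $F(X,\K)$ into any topological semilattice $L$) and the Clifford centrality of idempotents (to show that $\pi\circ i_X(X)$ algebraically generates $E(X,\K)$). The only difference is organizational --- the paper explicitly constructs the two mutually inverse homomorphisms $s:SL(X)\to E(X,\K)$ and $h|E(X,\K)$ and checks both composites on generators, whereas you verify that $(E(X,\K),j)$ satisfies the universal property of the free topological semilattice and invoke uniqueness of free objects; the underlying computations are identical.
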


\begin{proof} Let $(F(X,\K),i_X)$ be a free
topological inverse semigroup of $X$ in the class $\K$ and
$(SL(X),e_X)$ be a free topological semilattice of $X$. We shall
think of $SL(X)$ as the hyperspace of nonempty finite subsets of
$X$. In this case $e_X(x)=\{x\}$ for each $x\in X$.

Since idempotents of any inverse semigroup commute
\cite[1.17]{CP}, the set $E(X,\K)$ of idempotents of the semigroup
$F(X,\K)$ is a topological semilattice. Consequently, there is a
semigroup homomorphism $s:SL(X)\to E(X,\K)$ such that $s\circ
e_X=\pi\circ i_X$, where $\pi:F(X,\K)\to E(X,\K)$, $\pi:x\mapsto
x^{-1}x$, is the canonical retraction of $F(X,\K)$ onto $E(X,\K)$.
We claim that $s$ is a topological isomorphism whose inverse can
be found as follows.

Since the class $\K$ contains all topological semilattices, we get
$SL(X)\in\K$. Hence there is a continuous homomorphism
$h:F(X,\K)\to SL(X)$ such that $h\circ i_X=e_X$. We claim that
$h|E(X,\K)$ is the inverse map to $s$.

Observe that $h$ is a homomorphism of inverse semigroups and thus
for any $x\in F(X,\K)$ we have
$h\circ\pi(x)=h(x^{-1}x)=h(x)^{-1}h(x)=h(x)$. Then $h\circ
s(\{x\})=h\circ s\circ e_X(x)=h\circ \pi\circ i_X(x)=h\circ
i_X(x)=e_X(x)=\{x\}$. Hence the map $h\circ s$ coincides with the
identity map on the subset $e_X(X)\subset SL(X)$ which
algebraically generates $SL(X)$. Since $h\circ s$ is a semigroup
homomorphism, we conclude that $h\circ s$ is the identity map of
$SL(X)$.

Next, we show that $s\circ h|E(X,\K)$ is the identity map of
$E(X,\K)$. Given any $x\in X$ observe that $s\circ h\circ\pi\circ
i_X(x)=s\circ h\circ i_X(x)=s\circ e_X(x)=\pi\circ i_X(x)$. Thus
the semigroup homomorphism $s\circ h$ coincides with the identity
map on the set $\pi\circ i_X(X)\subset E(X,\K)$. To show that
$s\circ h|E(X,\K)$ is the identity map of $E(X,\K)$ it rests to
verify that the set $\pi\circ i_X(X)$ algebraically generates the
semilattice $E(X,\K)$. Since the set $i_X(X)$ algebraically
generates the semigroup $F(X,\K)$, for any $e\in E(X,\K)$ we can
find points $x_1,\dots,x_n\in i_X(X)$ and numbers
$\e_1,\dots,\e_n\in\{1,-1\}$ such that $e=x_1^{\e_1}\cdots x
_n^{\e_n}$. Since $e$ is an idempotent, we get $e=e^{-1}\cdot
e=(x_1^{\e_1}\cdots x_n^{\e_n})^{-1}\cdot (x_1^{\e_1}\cdots
x_n^{\e_n})=x_n^{-\e_n}\cdots x_1^{-\e_1}x_1^{\e_1}\cdots
x_n^{\e_n}$. Using the fact that elements of any inverse Clifford
semigroup commute with idempotents, we get
$e=x_1^{-\e_1}x_1^{\e_1}\cdots
x_n^{-\e_n}x_n^{\e_n}=\pi(x_1)\cdots\pi(x_n)$. Thus the
semilattice $E(X,\K)$ is algebraically generated by the set
$\pi\circ i_X(X)$ and $s\circ h|E(X,\K)$ is the identity map of
$E(X,\K)$.

Therefore $s:SL(X)\to E(X,\K)$ is a topological isomorphism.
\end{proof}

\begin{lemma}\label{shift} For any Tychonov space $X$ its free topological
inverse semigroup $F(X,\K)$ is shift-injective.
\end{lemma}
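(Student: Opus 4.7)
The plan is to reformulate shift-injectivity via the Clifford decomposition of $F(X,\K)$ and then verify it through the universal property. Since $F(X,\K)$ is an inverse Clifford semigroup, it decomposes as the disjoint union $F(X,\K)=\bigsqcup_{f\in E(X,\K)}G_f$ of its maximal subgroups $G_f:=\pi^{-1}(f)$, each a group with identity $f$; by Lemma~\ref{semilattice}, $E(X,\K)$ is isomorphic to the free semilattice $SL(X)$, which we identify with the semilattice of nonempty finite subsets of $X$ under union. Using the identities $xe=ex$ (idempotent commutes with everything) and $(xe)^{-1}(xe)=e\pi(x)e=\pi(x)e$, one checks that for $x\in G_f$ and any idempotent $e$ the product $xe$ lies in $G_{ef}$ and coincides with $\phi_{f,ef}(x)$, where $\phi_{f,ef}\colon G_f\to G_{ef}$, $x\mapsto xe$, is the connecting homomorphism of the Clifford decomposition.

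Consequently, shift-injectivity of $F(X,\K)$ becomes equivalent to the injectivity of every connecting homomorphism $\phi_{f,g}$ for $g\leq f$ in $E(X,\K)$: a failure of shift-injectivity supplies distinct $x,y\in G_f$ with $\phi_{f,ef}(x)=\phi_{f,ef}(y)$, equivalently a nonidentity $z:=y^{-1}x\in G_f$ in the kernel of $\phi_{f,ef}$. To prove $\phi_{f,g}$ injective, the strategy is, given a nonidentity $z\in G_f$, to detect it by a continuous homomorphism $h\colon F(X,\K)\to T$ into a target $T\in\K$ such that $h(z)\neq h(f)$ and such that the analogous connecting homomorphism of $T$ is injective on $h(G_f)$; then $h(zg)\neq h(g)$, forcing $zg\neq g$ in $F(X,\K)$. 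The homomorphism $h$ is to be produced via the universal property of $F(X,\K)$ from a suitable continuous map $X\to T$.

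\textit{Main obstacle.} The heart of the proof is constructing test semigroups $T\in\K$ with the required detection properties. Since $\K$ is only guaranteed to contain topological semilattices (which on their own cannot separate nontrivial group elements) $T$ must include genuine groups. A natural candidate is $F(\{\ast\},\K)\in\K$, the free inverse semigroup in $\K$ on a one-point space, which is a cyclic group; combining such targets with projections onto $SL(X)$ and exploiting closure of $\K$ under products and inverse subsemigroups should yield enough rigidity to separate $z$ from $f$ after multiplication by $g$. Verifying that the resulting $h$ is indeed injective on the relevant group component will rely essentially on Lemma~\ref{semilattice} together with the commutativity of idempotents with arbitrary elements of an inverse Clifford semigroup.
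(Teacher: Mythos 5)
Your reduction of shift-injectivity to the injectivity of the connecting homomorphisms $\phi_{f,ef}\colon G_f\to G_{ef}$ of the Clifford decomposition is correct and is consistent with the shape of the paper's argument (the paper likewise first disposes of the trivial case $ef=e$, where $xf=x\ne y=yf$ immediately). But your proof stops exactly where the real work begins: you name the construction of a detecting target $T\in\K$ as the ``main obstacle'' and then only express the hope that products of $F(\{\ast\},\K)$ with semilattices ``should yield enough rigidity.'' That hope is not justified, and the candidate you name would fail: $F(\{\ast\},\K)$ is a cyclic, hence abelian, group, whereas for $\K=\ICS$ the maximal subgroup $H_e=\pi^{-1}(e)$ is a free group on several generators; no homomorphism into abelian targets (or products of abelian groups with semilattices) can separate a nonidentity element of its commutator subgroup from the identity.

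The paper's resolution differs on two essential points, both absent from your outline. First, the target is $S=\{0,1\}\times H_e$, where the group factor is $H_e$ \emph{itself}; it belongs to $\K$ because $\K$ is closed under taking inverse subsemigroups, so no information about the group component can be lost. Second, the test map $g\colon X\to S$, defined by $g(x)=(1,e\cdot i_X(x))$ when $e\cdot\pi(i_X(x))=e$ and $g(x)=(0,e)$ otherwise, is in general \emph{not continuous}, and the homomorphism $h$ extending it over $F(X,\K)$ is obtained from the \emph{algebraic} freeness of $F(X,\K)$ (see \cite[2.5]{Cho}), not from the topological universal property. Your plan explicitly insists on producing $h$ ``from a suitable continuous map $X\to T$,'' which would block precisely this construction. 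Since neither the correct target nor the mechanism for extending a discontinuous test map appears in the proposal, there is a genuine gap at the central step.
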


\begin{proof} According to \cite[2.5]{Cho}, the semigroup
$(F(X,\K),i_X)$ is algebraically free in the sense that for any
(not necessarily continuous) map $f:X\to S$ into a Tychonov
semigroup $S\in\K$ there is a semigroup homomorphism $h:F(X,\K)\to
S$ such that $h\circ i_X=f$.

To show that $F(X,\K)$ is shift-injective, fix any distinct points
$x,y\in F(X,\K)$ with $\pi(x)=\pi(y)$ and any idempotent $f\in
E(X,\K)$. We have to show that $xf\ne yf$. Let $e=\pi(x)=\pi(y)$.
If $ef=e$, then $xf=x\pi(x)f=xef=xe=x\ne y=ye=yef=yf$. Thus it
rests to consider the case when $ef\ne e$.

Observe that $H_e=\pi^{-1}(e)$ is a subgroup of the inverse
Clifford semigroup $F(X,\K)$. Let $\{0,1\}$ be the discrete Lawson
semilattice endowed with the min-operation and $S=\{0,1\}\times
H_e$ be the product of the topological inverse semigroups
$\{0,1\}$ and $H_e$. Since $\K$ is a quasivariety containing all
Lawson semilattices, we get $S\in\K$.

Let $Y=\{x\in X:e\cdot(\pi\circ i_X(x))=e\}$ and consider the map
$g:X\to S$ defined by $g(x)=(0,e)$ if $x\notin Y$ and
$g(x)=(1,e\cdot i_X(x))$ if $x\in Y$. Since $F(X,\K)$ is
algebraically free, there is a semigroup homomorphism
$h:F(X,\K)\to S$ such that $h\circ i_X=g$.

We claim that $h(f)=(0,e)$. As we have shown in
Lemma~\ref{semilattice} the set $\pi\circ i_X(X)$ algebraically
generates the semilattice $E(X,\K)$. Consequently, there are
points $x_1,\dots,x_n\in X$ such that $f=\pi(\tilde x_1)\cdots
\pi(\tilde x_n)$ where $\tilde x_i=i_X(x_i)$. Since $ef\ne e$,
there is $i_0\le n$ such that $e\pi(\tilde x_{i_0})\ne e$. This
means that $x_{i_0}\notin Y$.

Note that $h\circ\pi(\tilde x_i)=h(\tilde x_i^{-1}\tilde
x_i)=h(\tilde x_i)^{-1}h(\tilde x_i)$ and thus $h\circ\pi(\tilde
x_i)=(1,e)$ if $x_i\in Y$ and $h\circ \pi(\tilde x_i)=(0,e)$ if
$x_i\notin Y$. Since $x_{i_0}\notin Y$, we get
$h(f)=h\circ\pi(\tilde x_1)\cdots h\circ\pi(x_n)=(0,e)$.

Next, we verify that $h(z)=(1,z)$ and $h(zf)=(0,z)$ for any $z\in
H_e$. Taking into account that $F(X,\K)$ is generated by the set
$i_X(X)$ we can find points $z_1,\dots,z_n\in X$ and numbers
$\e_1,\dots,\e_n\in\{-1,1\}$ such that $z=\tilde z_1^{\e_1}\cdots
\tilde z_n^{\e_n}$, where $\tilde z_i=i_X(z_i)$. Then
$e=\pi(z)=\pi(\tilde z_1)\cdots \pi(\tilde z_n)$ and thus
$e\pi(\tilde z_i)=e$ for $i\le n$, which means that $z_i\in Y$ for
all $i\le n$. Consequently, $h(\tilde z_i)=g(z_i)=(1,e\cdot \tilde
z_i)$ for all $i\le n$ and $h(z)=h(\tilde z_1^{\e_1}\cdots \tilde
z_n^{\e_n})=h(\tilde z_1^{\e_1})\cdots h(\tilde z_n^{\e_n})=(1,
e\cdot \tilde z_1^{\e_1}\cdots z_n^{\e_n})=(1,ez)=(1,z)$. Since
$h$ is a semigroup homomorphism, we conclude that $h(zf)=h(z)\cdot
h(f)=(1,z)\cdot(0,e)=(0,z\cdot e)=(0,z)$.

Taking into account that $x,y$ are distinct elements of the group
$H_e$, we get $h(xf)=(0,x)\ne(0,y)=h(yf)$. It follows that $xf\ne
yf$ and thus the semigroup $F(X,\K)$ is shift-injective.
\end{proof}

In order to apply Theorem~\ref{sakai} we need to find conditions
under which compact subsets have monotone collars in $SL(X)$.

\begin{lemma}\label{semicol} If $X$ is a non-degenerate
connected locally connected compact metrizable space, then each
finite-dimensional compact subset of the free topological
semilattice $SL(X)$ has a monotone collar in $SL(X)$.
\end{lemma}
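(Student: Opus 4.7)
The plan is to reduce to a single level of the $k_\omega$-filtration of $SL(X)$ and then build an explicit monotone collar using pairwise disjoint arcs in $X$ as injective ``expansion directions'' indexed by a Menger-Nöbeling embedding of $K$. First, $SL(X)$ is a $k_\omega$-space with defining tower $\exp_n(X):=\{A\in SL(X):|A|\le n\}$ of compact Hausdorff spaces (each a quotient of $X^n$); since $X$ is compact metric these carry the Vietoris topology, which coincides there with the semilattice topology. Hence the compact set $K$ lies in some $\exp_n(X)$, and I may work throughout in the Vietoris topology on a single such $\exp_n(X)$.

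Second, using that $X$ is a non-degenerate Peano continuum (hence locally path-connected, with no isolated points, and therefore containing arbitrarily many pairwise disjoint non-degenerate arcs), I would pick pairwise disjoint embedded arcs $\alpha_i:[0,1]\hookrightarrow J_i\subset X$ for $i=1,\dots,N$ with $N=2\dim K+1$, and apply the Menger-Nöbeling theorem to obtain a closed embedding $\psi=(\psi_1,\dots,\psi_N):K\hookrightarrow[0,1]^N$. The tentative collar is then
$$i:K\times[0,1]\to SL(X),\qquad i(A,t)\;=\;A\cup\{\alpha_i(t\,\psi_i(A)):1\le i\le N\}.$$
Continuity follows from continuity of each $\alpha_i$, $\psi_i$ and of the semilattice operation; monotonicity $A\subseteq i(A,t)$ is built in; and for $t>0$ the map $i$ is injective, because each ``marker'' $\alpha_i(t\psi_i(A))$ is recovered from its host arc $J_i$ (the $J_i$ being pairwise disjoint), and therefore $t\psi(A)$ and then $A$ are recovered from $\psi$ being an embedding.

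The main obstacle is the condition $i(A,0)=A$: at $t=0$ the markers collapse to the common endpoints $p_i=\alpha_i(0)$, so $i(A,0)=A\cup\{p_1,\dots,p_N\}$ which in general differs from $A$ (and a global continuous selection $K\to X$ with $s(A)\in A$ need not exist, as the example $K=\exp_2(S^1)$ shows). To fix this I would perform the whole construction locally rather than globally. Cover $K$ by finitely many Vietoris neighborhoods $V_\beta$ of base points $A^{(\beta)}\in K$ chosen of maximum cardinality; a pigeonhole argument (the cardinalities of the intersections of $A\in V_\beta$ with small disjoint open neighborhoods of the points of $A^{(\beta)}$ sum to $|A^{(\beta)}|$) yields in each chart canonical continuous selections $A\mapsto a^{(\beta)}_j(A)\in A$. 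On each chart, take the arcs to originate at the selected points $a^{(\beta)}_j(A)$ and let the endpoints ``slide'' continuously with $A$; glue the local collars together by a partition-of-unity argument adapted to the semilattice operation, with $\cup$ playing the role of sum and the partition values rescaling the arc parameters. Pairwise disjointness of the arc families chosen across different charts preserves injectivity throughout the gluing, completing the construction of the monotone collar.
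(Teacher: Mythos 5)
There is a genuine gap, and it sits exactly at the point you yourself flag as ``the main obstacle.'' Even for $t>0$ your global map has two defects: a point of $A$ may itself lie on one of the arcs $J_i$, so the marker $\alpha_i(t\psi_i(A))$ cannot in general be read off from $i(A,t)\cap J_i$; and even granting recovery of the marker set, the assignment $(A,t)\mapsto t\psi(A)$ is not injective (take $\psi(A')=\frac12\psi(A)$ and $t'=2t$), and the union with the common marker set can then erase the difference between $A$ and $A'$ (e.g.\ when $A\subset A'\cup M$ and $A'\subset A\cup M$). Those defects are curable, but the decisive failure is in your repair of the condition $i(A,0)=A$. The pigeonhole selection works only on Vietoris charts centered at sets of \emph{maximal} cardinality: if $A^{(\beta)}=\{a_1,\dots,a_m\}$ and $V_\beta=\langle U_1,\dots,U_m\rangle$ with the $U_j$ disjoint, then every $A\in V_\beta$ meets each $U_j$, so $|A|\ge m$ and the intersections $A\cap U_j$ are singletons only when $|A|=m$. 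Since cardinality is merely lower semicontinuous in the Vietoris topology, such charts cannot cover the elements of $K$ of non-maximal cardinality, and near those elements no continuous selection exists; your own example $\exp_2(S^1)$ already defeats the construction \emph{locally} near a singleton, not just globally. Finally, ``gluing by a partition of unity adapted to the semilattice operation'' is not an available tool: the operation is idempotent union, a partition weight tending to $0$ collapses the corresponding marker back onto $A$, and injectivity of the glued map is precisely what has to be proved and is nowhere addressed.

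For comparison, the paper's own proof is a one-line reduction: it invokes Lemma~2 of \cite{BS}, which produces $e:K\times[0,1]\to SL(X)$ with $e(A,0)=A$ and $(A,t)\mapsto A\cup e(A,t)$ injective, and then observes that $A\cup(A\cup e(A,t))=A\cup e(A,t)$ gives monotonicity for free. The essential idea there, which your proposal lacks, is that the new points are grown out of the points of $A$ \emph{itself} (so the $t=0$ condition is automatic) by a continuous ``hair-growing'' map defined on $X\times[0,1]$ and applied simultaneously to all points of $A$ at the level of the hyperspace --- this is exactly what makes continuous selections unnecessary; the finite-dimensionality of $K$ and the embedding into a cube then enter in proving injectivity of that construction. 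Your reduction to a single $\exp_n(X)$ with the Vietoris topology and the use of a Menger--N\"obeling embedding are sound and in the right spirit, but as written the argument does not close.
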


\begin{proof} As we said the free topological semilattice $SL(X)$
can be identified with the hyperspace of all non-empty finite
subsets of $X$ endowed with the operation of union as a
semilattice operation. Repeating the argument of Lemma 2 from
\cite{BS} for any finite-dimensional compact subset $K\subset
SL(X)$ we can construct a collar $e:K\times [0,1]\to K$ such that
the map $f:K\times[0,1]\to SL(X)$, $f:(x,t)\mapsto x\cup e(x,t)$,
is injective (here $\cup$ stands for the semilattice operation of
$SL(X)$). Then $x\cup f(x,t)=x\cup x\cup e(x,t)=f(x,t)$ for
$(x,t)\in K\times [0,1]$, and thus $f$ is a monotone collar of $K$
in $SL(X)$. \end{proof}

\begin{lemma}\label{collar2} If $X$ is a locally connected compact metrizable space
without isolated points, then each compact subset of $SL(X)$ has a
monotone collar in $SL(X)$.
\end{lemma}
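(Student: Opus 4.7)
The plan is first to reduce to the case that $X$ is a single Peano continuum by decomposing $K$ according to which components of $X$ its members meet, and then to adapt the construction of Lemma~\ref{semicol} so that the no-isolated-points hypothesis replaces the finite-dimensionality restriction.

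Since $X$ is compact, metrizable, and locally connected, its connected components $X_1,\dots,X_m$ are clopen and finite in number, and each $X_i$ is a Peano continuum; because $X$ has no isolated points, every $X_i$ is non-degenerate. For $F\in SL(X)$ let $J(F):=\{i:F\cap X_i\neq\emptyset\}$, a nonempty subset of $\{1,\dots,m\}$. The map $J$ into the finite discrete set of nonempty subsets of $\{1,\dots,m\}$ is locally constant (Vietoris-close sets share the same intersection pattern with a fixed clopen partition), hence continuous, so the compact $K$ splits as a finite disjoint union of clopen compact pieces $K_J$. Monotone collars of these pieces can be combined into a monotone collar of $K$, so I may assume $K=K_J$ for a fixed $J$. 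Fix $i_0\in J$ and let $p(F):=F\cap X_{i_0}$, which is continuous from $K$ to $SL(X_{i_0})$ with compact image $L$. Any monotone collar $\tilde e$ of $L$ in $SL(X_{i_0})$ then lifts via $e(F,t):=F\cup\tilde e(p(F),t)$: the $X_j$-parts of $e(F,t)$ for $j\neq i_0$ are exactly $F\cap X_j$ and recover $F$ off $X_{i_0}$, while the $X_{i_0}$-part recovers $(p(F),t)$ by the injectivity and monotonicity of $\tilde e$. Thus the problem reduces to the case $X=Y$ a non-degenerate Peano continuum without isolated points.

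Let $L\subset SL(Y)$ be compact. Writing $SL(Y)$ as the direct limit of the compacta $SL_n(Y):=\{F\in SL(Y):|F|\le n\}$ (continuous images of $Y^n$), compactness gives $L\subset SL_n(Y)$ for some $n$. The plan is to follow the scheme of Lemma~\ref{semicol} and Lemma~2 of \cite{BS}: first produce an ambient collar $\tilde e_0:L\times[0,1]\to SL(Y)$ and then monotonise by $e(F,t):=F\cup\tilde e_0(F,t)$, for which $e(F,0)=F$ and $F\cup e(F,t)=e(F,t)$ are automatic; the only thing requiring work is injectivity of $e$. I arrange $\tilde e_0$ to add points in arbitrarily small neighbourhoods of $F$ chosen along arcs varying continuously in $(F,t)$. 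Such arcs exist locally at every point of $Y$ thanks to local connectivity and the absence of isolated points, and they can be patched globally via a finite cover of $Y$ by small connected open sets together with a partition-of-unity selection, designed so that the added points uniquely encode the pair $(F,t)$.

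The principal obstacle is the injectivity of the monotonised collar in the absence of the dimension control on $L$ used in Lemma~\ref{semicol}: there, finite-dimensionality was invoked precisely to prevent accidental mergers upon taking unions with $F$. Here the role of that hypothesis is played by the uniform absence of isolated points on $Y$, which provides enough "room" near every point to select growth arcs globally; balancing continuity with injectivity in this selection, and doing so for compacta $L$ that may lie in an infinite-dimensional $SL_n(Y)$, is the technical heart of the argument.
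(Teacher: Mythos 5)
Your reduction to a single component is sound and is essentially the paper's own first step: the paper likewise splits $SL(X)$ into the clopen pieces $SL_{\J}(X)$ indexed by the intersection pattern of $F$ with the finitely many components and then uses the isomorphism $SL_{\J}(X)\cong\prod_{i\in\J}SL(X_i)$; your variant of collaring only the $X_{i_0}$-coordinate via $e(F,t)=F\cup\tilde e(p(F),t)$ is a correct (and slightly more economical) way to finish that step. The genuine gap is in the core case of a single non-degenerate Peano continuum $Y$: you never construct the monotone collar there. What you offer is a plan (add points along arcs chosen by a finite cover and a partition of unity so that ``the added points uniquely encode the pair $(F,t)$''), and you then explicitly concede that making this selection simultaneously continuous and injective ``is the technical heart of the argument.'' That heart is exactly where all the content of the lemma lives, so what you have is an outline, not a proof. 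The paper disposes of this case by quoting Lemma~\ref{semicol}, whose own proof consists of repeating the collar construction of Lemma~2 of \cite{BS}; the intended route is thus a citation of an existing construction rather than a new one, and your proposal replaces that citable construction with an unproved sketch.

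A secondary point: you correctly observe that Lemma~\ref{semicol} is stated only for \emph{finite-dimensional} compact subsets of $SL(Y)$, and that a compact subset of $SL(Y)$ need not be finite-dimensional when $Y$ is, say, the Hilbert cube; the paper's proof of the present lemma in fact applies Lemma~\ref{semicol} to arbitrary compacta without comment. Having spotted this tension, however, you neither resolve it nor retreat to the finite-dimensional case with a justification (note that in the only place the lemma is used, Lemma~\ref{manifold}, the semilattice $SL(X)$ sits inside a retract of an $\IRi$-manifold, hence inside a closed subspace of $\IRi$, so the relevant compacta are automatically finite-dimensional and Lemma~\ref{semicol} applies as stated). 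Either closing the infinite-dimensional case with an actual construction, or restricting to finite-dimensional compacta and explaining why that suffices for the application, would be needed to make the argument complete.
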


\begin{proof} Since $X$ is compact and locally connected, $X$
decomposes into a finite topological sum of connected components
$X=\oplus_{i\in\I} X_i$ which are closed-and-open sets in $X$.
Then the hyperspace $\mathcal{F}(X)$ of non-empty finite subsets
of $X$ endowed with the Vietoris topology decomposes into
closed-and-open connected components of the form
$SL_{\J}(X)=\{F\in \mathcal{F}(X):F\subset\bigcup_{i\in\J}X_i,\;
F\cap X_i\ne\emptyset$ for each $i\in\J\}$ where $\J$ is a
non-empty subset of the index set $\I$. Since $SL(X)$ carries a
stronger topology than the Vietoris one, each set $SL_{\J}(X)$ is
a closed-and-open subsemilattice in $SL(X)$. To show that each
compact subset $K$ of $SL(X)$ has a monotone collar in $SL(X)$ it
suffices to verify that each nonempty intersection $K\cap
SL_{\J}(X)$ has a monotone collar in $SL_{\J}(X)$.
Observe that the semilattice $SL_{\J}(X)$ is topologically
isomorphic to the product $\prod_{i\in\J}SL(X_i)$ via the
isomorphism $h:F\mapsto (F\cap X_i)_{i\in\J}$, see \cite{BS}. By
Lemma~\ref{semicol}, each compact subset of the topological
semilattice $SL(X_i)$, $i\in\J$, has a monotone collar in
$SL(X_i)$. Then the same is true for the product
$\prod_{i\in\J}SL(X_i)$ and for its isomorphic copy $SL_{\J}(X)$.
\end{proof}

\begin{lemma}\label{locon} If the free topological inverse semigroup $F(X,\K)$
of a Tychonov space $X$ is a retract of an $\IRi$-manifold, then
each compact subset of $X$ is contained in a locally connected
compact subset of $X$.
\end{lemma}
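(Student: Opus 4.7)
The approach rests on the fact that $F(X,\K)$, being a retract of an $\IRi$-manifold, is an absolute neighborhood extensor for finite-dimensional metrizable compacta (briefly ANE(f.d.c.)~) and is homeomorphic to a closed subspace of $\IRi$; consequently each of its compact subsets is finite-dimensional and metrizable. Composing with the canonical retraction $\pi\colon F(X,\K)\to E(X,\K)$, $x\mapsto x^{-1}x$, and invoking Lemma~\ref{semilattice} to identify $E(X,\K)$ with $SL(X)$, one observes that $SL(X)$ is also a retract of an $\IRi$-manifold, hence itself an ANE(f.d.c.). Since the canonical map $i_X\colon X\to F(X,\K)$ is a closed topological embedding, $X$ itself is closed in $\IRi$, so any compact subset of $X$ is finite-dimensional and metrizable.

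Given a compact $K\subset X$, my plan is to embed $K$ in some finite-dimensional cube $I^n$ and use the ANE(f.d.c.) property of $SL(X)$ to extend the singleton map $K\to SL(X)$, $x\mapsto\{x\}$, to a continuous map $\bar f\colon U\to SL(X)$ defined on a neighborhood $U$ of $K$ in $I^n$. Taking a compact locally connected neighborhood $V$ of $K$ inside $U$ (for example, a sufficiently small closed cubical neighborhood), the image $W:=\bar f(V)$ will be a compact locally connected subset of $SL(X)$ containing the singletons $\{x\}$ for $x\in K$. The union $L:=\bigcup_{F\in W}F\subset X$ is then a compact subset of $X$ containing $K$; its compactness follows from the compactness of $W$ in the Vietoris hyperspace of finite subsets of $X$ and the standard continuity of the union operation.

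The main obstacle is to show that $L$ itself is locally connected. Here I would exploit that $W$, being compact in the $k_\omega$-space $SL(X)$, is contained in some compact piece of the form $\{F\in SL(X):F\subset X',\ |F|\le n\}$ for a compact subset $X'\subset X$ and a natural number $n$; in particular, $W$ lies in the image of the continuous surjection $q_n\colon X^n\to\{F\in SL(X):|F|\le n\}$, $(x_1,\dots,x_n)\mapsto\{x_1,\dots,x_n\}$, and $L$ coincides with the union of the coordinate projections of the compact preimage $q_n^{-1}(W)\subset X^n$. The hardest step is to establish that $q_n^{-1}(W)$, or a suitable parametrization of it built from $W$ and the symmetric-product quotient $q_n$, inherits local connectedness from $W$ in a sufficiently strong form; once such a compact locally connected parametrization of $L$ is produced, local connectedness of $L$ follows from the classical theorem that the continuous image of a compact locally connected space in a Hausdorff space is locally connected.
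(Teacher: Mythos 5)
Your overall strategy is the same as the paper's: produce a compact, locally connected subset $W$ of $SL(X)\cong E(X,\K)$ containing $e_X(K)$, and then pass to the union $\bigcup_{F\in W}F\subset X$. Your method for producing $W$ (embed $K$ in a cube, extend the singleton map over a cubical neighborhood using the ANE(f.d.c.) property of $SL(X)$) is a legitimate variant of the paper's method (take a locally connected compactum in the ambient $\IRi$-manifold containing $e_X(K)$ and push it into $SL(X)$ by the retraction); both yield the required $W$ via the Hahn--Mazurkiewicz--Sierpi\'nski theorem. Up to that point the argument is sound.

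However, the proof has a genuine gap exactly at the step you yourself flag as ``the hardest'': showing that $L=\bigcup_{F\in W}F$ is locally connected. This is the substantive content of the lemma, and the route you sketch does not work as stated. Local connectedness is \emph{not} inherited by preimages under quotient (or even finite-to-one closed) maps, so there is no reason for $q_n^{-1}(W)$ to be locally connected just because $W$ is, and you give no construction of the ``suitable parametrization'' you allude to. (There is also a secondary issue: a Vietoris-compact family of finite sets need not have uniformly bounded cardinalities in general, so even the containment $W\subset\{F:|F|\le n\}$ requires an argument in your setting.) The paper closes this gap by invoking Lemma 2.2 of Curtis and Nguyen To Nhu \cite{CN}, which asserts precisely that the union of a compact, locally connected (in the Vietoris topology) family of nonempty finite subsets of $X$ is a compact, locally connected subset of $X$. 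You need either to cite that result or to supply an actual proof of it; as written, your argument establishes only that $L$ is compact and contains $K$, not that it is locally connected.
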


\begin{proof} According to Lemma~\ref{semilattice}, the free topological
semilattice $SL(X)$ is topologically isomorphic to the semilattice
$E(X,\K)$ of idempotents of the inverse  semigroup $F(X,\K)$. If
$F(X,\K)$ is a retract of an $\IRi$-manifold, then $E(X,\K)$,
being a retract of $F(X,\K)$ (under the retraction $\pi:F(X,\K)\to
E(X,\K)$, $\pi:x\mapsto x^{-1}x$), is a retract of an
$\IRi$-manifold too. The same is true for the free topological
semilattice $SL(X)$ which is an isomorphic copy of $E(X,\K)$. So
we can assume that $SL(X)$ is a retract of some $\IRi$-manifold
$M$. Denote by $r:M\to SL(X)$ a corresponding retraction.

Let $K$ be a compact subset of $X$. By Theorem 1 of \cite{BGG} or
\cite[2.5]{Cho} the canonical map $e_X:X\to SL(X)$ is a
topological embedding. Consequently, $e_X(K)$ is a compact subset
of $SL(X)$. Since each compact subset of the $\IRi$-manifold
$M\supset SL(X)$ lies in some locally connected compact subset of
$M$, we can find a locally connected compact subset $C\subset M$
containing the compactum $e_X(K)$. Then $r(C)$, being a continuous
image of a locally connected compact metrizable space, is a
locally connected compact metrizable subset of $SL(X)$ containing
the compactum $e_X(K)$ (the local connectedness of $r(C)$ follows
from the Hahn-Mazurkiewicz-Sierpi\'nski Theorem, see
\cite[p.261]{Ku}). Identifying $SL(X)$ with the hyperspace
$\mathcal{F}(X)$ of nonempty finite subsets of $X$, we see that
$r(C)$ is compact and locally connected with respect to the
Vietoris topology on $\mathcal{F}(X)$. Then by Lemma 2.2 of
\cite{CN} the union $\cup r(C)=\{x\in X:x\in F$ for some $F\in
r(C)\}$ is a compact locally connected subset of $X$ containing
the compactum $K$.
\end{proof}

The ``if'' part of Theorem~\ref{main} follows from the subsequent
lemma.

\begin{lemma}\label{manifold} The free topological inverse semigroup
$F(X,\K)$ of a topological space $X$ is an $\IRi$-manifold
provided $X$ has no isolated points and $F(X,\K)$ is a retract of
an $\IRi$-manifold.
\end{lemma}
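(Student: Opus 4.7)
The plan is to invoke Theorem~\ref{sakai}: since $F(X,\K)$ is already assumed to be a retract of an $\IRi$-manifold, it suffices to exhibit a collar in $F(X,\K)$ around each compact subset $K\subset F(X,\K)$. By Lemma~\ref{shift} the semigroup $F(X,\K)$ is shift-injective, so Lemma~\ref{collar} reduces the task to producing a \emph{monotone} collar of $\pi(K)$ inside the semilattice $E(X,\K)$ of idempotents; via Lemma~\ref{semilattice} I identify $E(X,\K)$ with the free topological semilattice $SL(X)$, realized as the hyperspace of non-empty finite subsets of $X$ carrying the free semilattice topology.

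Given a compact $K\subset F(X,\K)$, form the compact set $K':=\bigcup\pi(K)\subset X$ and apply Lemma~\ref{locon} to obtain a locally connected compact metrizable $Y_0\subset X$ with $K'\subset Y_0$. The heart of the proof consists in enlarging $Y_0$ to a locally connected compact metrizable $Y^*\subset X$ that contains $K'$ and has \emph{no isolated points}. Granted such a $Y^*$, Lemma~\ref{collar2} produces a monotone collar $e:\pi(K)\times[0,1]\to SL(Y^*)$ (note $\pi(K)\subset SL(Y^*)$ since every $F\in\pi(K)$ is a subset of $K'\subset Y^*$). Composing with the canonical continuous semilattice homomorphism $SL(Y^*)\to SL(X)$ induced by the inclusion $Y^*\hookrightarrow X$ yields a continuous, injective, monotone map $\pi(K)\times[0,1]\to SL(X)$, which is automatically a topological embedding because its domain is compact and $SL(X)$ is Hausdorff. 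This is the required monotone collar of $\pi(K)$ in $E(X,\K)\cong SL(X)$, and Lemma~\ref{collar} converts it into a collar of $K$ in $F(X,\K)$, supplying the remaining hypothesis of Theorem~\ref{sakai}.

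The central obstacle is the construction of $Y^*$. Compact local connectedness of $Y_0$ forces the connected components (in particular the singleton components, i.e.~the isolated points) to be finite in number; call the isolated points $d_1,\dots,d_m$. Each $d_j$ is non-isolated in $X$, so pick a sequence $z_{j,k}\in X\setminus\{d_j\}$ with $z_{j,k}\to d_j$; Lemma~\ref{locon} applied to the compact set $\{d_j\}\cup\{z_{j,k}:k\in\IN\}$ yields a locally connected compactum $C_j\subset X$, and a short argument shows that $d_j$ is non-isolated in $C_j$: if $d_j$ were isolated, some $X$-neighborhood of $d_j$ would meet $C_j$ only at $d_j$, contradicting $z_{j,k}\to d_j$ with $z_{j,k}\in C_j$. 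A second application of Lemma~\ref{locon} to the compact set $Y_0\cup C_1\cup\cdots\cup C_m$ produces a locally connected compact metrizable $Y\subset X$ in which every point of $K'$ is non-isolated (points of $K'\cap(Y_0\setminus\{d_j\})$ were already non-isolated in $Y_0$, and each $d_j\in K'$ is non-isolated in $C_j\subset Y$). Finally, delete the finitely many isolated singleton components of $Y$ — none of which can belong to $K'$ — to obtain the desired $Y^*$: a compact, locally connected, metrizable subspace of $X$ containing $K'$ and having no isolated points.
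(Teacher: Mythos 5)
Your proof follows the same route as the paper: Theorem~\ref{sakai} together with Lemmas~\ref{collar}, \ref{semilattice} and \ref{shift} reduces everything to producing a monotone collar of the compactum $\pi(K)$ in $SL(X)$, which you obtain from Lemmas~\ref{locon} and \ref{collar2} after passing to a locally connected compact metrizable subset of $X$ containing $\bigcup\pi(K)$ and identifying its free semilattice with a subsemilattice of $SL(X)$. The one place you go beyond the paper is the construction of $Y^*$: the paper applies Lemma~\ref{collar2} directly to the compactum $C$ supplied by Lemma~\ref{locon} without verifying that $C$ has no isolated points (a hypothesis of that lemma, and precisely the point where the assumption that $X$ has no isolated points must enter); your enlargement-and-deletion argument supplies this missing verification, and it is correct.
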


\begin{proof} According to Theorem~\ref{sakai} and
Lemmas~\ref{collar}, \ref{semilattice}, \ref{shift} it suffices to
prove that each compact subset of the free topological semilattice
$SL(X)$ of $X$ has a monotone collar in $SL(X)$. Let $K\subset
SL(X)$ be a compact subset of $SL(X)$. Identifying $SL(X)$ with
the hyperspace $\mathcal{F}(X)$ of nonempty finite subsets of $X$
we see that $K$ is compact with respect to the Vietoris topology.
Consequently, the set $\cup K\subset X$ is compact, see
\cite[IV.8.7]{FF}. By the preceding lemma, there is a compact
locally connected subset $C\subset X$ containing the set $\cup K$.
By Theorem 4 of \cite{BGG} the natural semilattice homomorphism
$SL(C)\to SL(X)$ generated by the embedding $C\to X$ is a
topological embedding. Hence we can identify $SL(C)$ with a
subsemilattice of $SL(X)$. By Lemma~\ref{collar2}, the compact
subset $K\subset SL(C)\subset SL(X)$ has a monotone collar in
$SL(C)$ (and also in $SL(X)$).
\end{proof}

Finally let us prove the ``only if'' part of Theorem~\ref{main}.

\begin{lemma}\label{isol} If the free topological inverse semigroup $F(X,\K)$
of a Tychonov space $X$ is an $\IRi$-manifold, then the space $X$
has no isolated points.
\end{lemma}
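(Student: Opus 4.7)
The plan is to argue by contradiction: assuming $x_0 \in X$ is isolated, I will produce a nonempty open subset of $F(X,\K)$ that is at most countable, which is impossible in an $\IR^\infty$-manifold.

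First I would transfer the isolatedness from $X$ to the semilattice of idempotents. Since $\{x_0\}$ is open in $X$, the Vietoris basic set $\langle\{x_0\}\rangle=\{\{x_0\}\}$ is open in the Vietoris hyperspace $\F(X)$; the free semilattice topology on $SL(X)$ is stronger than the Vietoris one, so $\{\{x_0\}\}$ is open in $SL(X)$ as well. Lemma~\ref{semilattice} identifies $SL(X)$ with $E(X,\K)$ via $\{x\}\mapsto\pi(i_X(x))$, so $e_0:=\pi(i_X(x_0))$ is an isolated point of $E(X,\K)$. Because $\pi:F(X,\K)\to E(X,\K)$ is continuous, the H-class $H_{e_0}:=\pi^{-1}(e_0)$ is a nonempty open subset of $F(X,\K)$.

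Second, I would show that $H_{e_0}$ is at most countable. In any Clifford inverse semigroup idempotents are central, and a short computation then gives that $\pi$ is a semigroup homomorphism and $\pi(i_X(x)^{\pm1})=\pi(i_X(x))$ for every $x\in X$. Writing a general element $y=i_X(x_1)^{\e_1}\cdots i_X(x_n)^{\e_n}\in F(X,\K)$, we get $\pi(y)=\{x_1,\dots,x_n\}$ in $E(X,\K)\cong SL(X)$, so $y\in H_{e_0}$ forces $x_1=\cdots=x_n=x_0$. Hence every element of $H_{e_0}$ is a word in $a:=i_X(x_0)$ and $a^{-1}$, and the Clifford relations $aa^{-1}=a^{-1}a=e_0$ (with $e_0$ central) let each such word collapse to a single power $a^n$, $n\in\mathbb Z$. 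Therefore $H_{e_0}\subset\{a^n:n\in\mathbb Z\}$ is countable.

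To close the argument, recall that $\IR^\infty=\bigcup_{n\ge1}\IR^n$ carries the direct limit topology, so any nonempty open subset of $\IR^\infty$ meets some $\IR^n$ in a nonempty Euclidean open set, hence has cardinality of the continuum. Consequently every nonempty open subset of an $\IR^\infty$-manifold is uncountable, contradicting the fact that $H_{e_0}$ is a nonempty countable open subset of $F(X,\K)$. The main obstacle is the algebraic collapsing in the second step: both hypotheses on $\K$ enter crucially there, since $\SL\subset\K$ is needed to invoke Lemma~\ref{semilattice} and pin down the ``support'' of an element, while the Clifford assumption makes the cancellations $aa^{-1}=e_0$ propagate through an arbitrary word and yield the cyclic description of $H_{e_0}$.
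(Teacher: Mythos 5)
Your proof is correct and follows essentially the same route as the paper: both arguments produce a nonempty countable open subset of $F(X,\K)$ from an isolated point of $X$ and contradict the uncountability of nonempty open sets in an $\IRi$-manifold; indeed your set $\pi^{-1}(e_0)$ coincides with the paper's $h^{-1}(\{x\})$, where $h:F(X,\K)\to\F(X)$ is the canonical homomorphism to the free Lawson semilattice. The only cosmetic differences are that the paper gets openness directly from continuity of $h$ rather than via Lemma~\ref{semilattice} and $\pi$, and bounds the cardinality by counting sign sequences $\e_1,\dots,\e_n$ instead of collapsing words to powers $a^n$ via the Clifford relations.
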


\begin{proof} Assume that $x\in X$ is an isolated point of $X$.
Then the one-point set $\{x\}$ is an isolated point of the free
Lawson semilattice $(\mathcal{F}(X),e_X)$ (identified with the
hyperspace of finite subsets of $X$ endowed with the Vietoris
topology). Since $\mathcal{F}(X)\in\K$, there is a continuous
semigroup homomorphism $h:F(X,\K)\to\mathcal{F}(X)$ such that
$h\circ i_X=e_X$, where $i_X:X\to F(X,\K)$ is the canonical map of
$X$ into $F(X,\K)$. It follows that $A=h^{-1}(\{x\})$ is a
nonempty open  subset of $F(X,\K)$.

We claim that $A$ is at most countable. Take any point $a\in A$.
Since $F(X,\K)$ is algebraically generated by the set $i_X(X)$,
there are points $x_1,\dots,x_n\in X$ and numbers
$\e_1,\dots,\e_n\in\{-1,1\}$ such that $a=\tilde x_1^{\e_1}\dots
\tilde x_n^{\e_n}$, where $\tilde x_i=i_X(x_i)$. Then
$\{x\}=h(a)=h(\tilde x_1^{\e_1})\cdots(\tilde x_n^{\e_n})=h(\tilde
x_1)^{\e_1}\cdots h(\tilde x_n)^{\e_n}=e_X(x_1)\cdots
e_X(x_n)=\{x_1,\dots,x_n\}$. It follows that $x_i=x$ for all $i\le
n$ and thus the cardinality of the set $A$ does not exceed the
cardinality of the countable set of all possible sequences
$\e_1,\dots,\e_n\in\{-1,1\}$, $n\in\IN$.

Since each nonempty open subset of an $\IRi$-manifold is
uncountable we get a contradiction with the fact that $A$ is open
and at most countable. This contradiction proves that $X$ has no
isolated points.
\end{proof}

All our results concerned free topological inverse semigroups in
the classes of topological inverse Clifford semigroups.

\begin{question} Suppose a topological space $X$ having no isolated points
is a neighborhood retract of $\IRi$. Is the free topological
inverse semigroup $I(X)$ of $X$ an $\IRi$-manifold?
\end{question}

It should be mentioned that applying methods of the theory of
topological functors (see \cite[7.21]{Za2}) it can be shown that
$I(M)$ is an $\IRi$-manifold for any $\IRi$-manifold $M$.
\newpage


\begin{thebibliography}{99}

\bibitem[Ba]{Ba0}
T.O.~Banakh. Parametric results for certain classes of
infinite-dimensional manifolds // {\em Ukr. Mat. Zhurn.} -- 1991.
-- V.43, N.6. -- P.853--859. (in Russian).

\bibitem[BGG]{BGG}
T.O.~Banakh, I.I.~Guran, O.V.~Gutik. Free topological inverse
semigroups //{\em Matem. Studii}. -- 2001. -- V.15, N.1. --
P.23--43.

\bibitem[BS$_1$]{BS}
T.O.~Banakh, K.~Sakai. Characterization of
$(\mathbb{R}^\infty,\sigma )$-or
$(\mathbb{Q}^\infty,\Sigma)$-manifolds and their applications //
{\em Topology Appl.} -- 2000. -- V.106. -- P.115--134.

\bibitem[BS$_2$]{BS1}
T.O.~Banakh, K.~Sakai. Free topological semilattices homeomorphic
to $\mathbb{R}^\infty$- and $\mathbb{Q}^\infty$-manifolds // {\em
Topology Appl.} -- 2000. -- V.106. -- P.135--147.

\bibitem[Bo]{Bo} K.~Borsuk. {\em  Theory of Retracts}. -- Warsaw: PWN, 1967.

\bibitem[BZ]{BZ}
L.Ye.~Bazylevych, M.M.~Zarichnyi.  {\em Introduction to the theory
of infinite-dimentional manifolds}. -- Kyiv: IZMN, 1996. -- 40p.
(in Ukrainian)

\bibitem[Cho]{Cho}
M.M.~Choban. Some topics in topological algebra // {\em Topology
Appl.} -- 1993. -- V.54. -- P.183--202.

\bibitem[CP]{CP} A.H.~Clifford, G.B.~Preston, {\em The algebraic
theory of semigroups}. -- Moscow: Mir, 1972 (in Russian).

\bibitem[CN]{CN}
D.W.~Curtis, Nguyen To Nhu. Hyperspaces of finite subsets which
are homeomorphic to $\aleph_0$-dimensional linear metric space //
{\em Topology Appl.} -- 1985. -- V.22. -- P.251--260.

\bibitem[FF]{FF}
V.V.~Fedorchuk, V.V.~Filippov. {\em General Topology. Basic
Constructions}. -- Moscow.: Izd-vo MGU, 1988. (in Russian).

\bibitem[FZ]{FZ} V.V.Fedorchuk, M.M.Zarichnyi. Covariant functors
in category of topological spaces // in: {\em Itogi Nauki i
Tekhniki. Algebra. Topology. Geometry.} -- 1990. -- V.28. --
P.47-95. (in Russian)

\bibitem[Hr]{Hr} O.~Hryniv, Free topological universal algebras and
absolute extensors, preprint.

\bibitem[Ku]{Ku} K.~Kuratowski, {\em Topology, II}. -- Moscow: Mir, 1969
(in Russian).

\bibitem[Ma]{Markov}
A.A.~Markov. On free topological groups // {\em Doklady Akad. Nauk
SSSR.} -- 1941. -- V.31. -- P.299--301. (in Russian)

\bibitem[Sa]{Sa}
K.~Sakai. On $\mathbb{R}^\infty$- and
$\mathbb{Q}^\infty$-manifolds // {\em Topology Appl.} -- 1984. --
V.18. -- P.69-79.



\bibitem[Za$_1$]{Zar}
M.M.~Zarichnyi. Free topological groups of absolute neighborhood
retracts and infinite-dimensional manifolds // {\em Doklady Akad.
Nauk SSSR.} -- 1982. -- V.266. -- P.541--544. (in Russian).

\bibitem[Za$_2$]{Za3}
M.M.~Zarichnyi. Infinite-dimensional manifolds arising from direct
limits of ANR's // {\em Uspekhi Mat. Nauk}. -- 1984. -- V.39, N.2.
-- P.153--154. (in Russian)


\bibitem[Za$_3$]{Za2}
M.M.~Zarichnyi. On covariant topological functors, II // {\em
Quest. and Answ. in Gen. Top.} -- 1991. -- V.9, N.1. -- P.1--32.
\end{thebibliography}
\end{document}